\numberwithin{equation}{section}
\definecolor{webgreen}{rgb}{0,.5,0}
\definecolor{webbrown}{rgb}{.6,0,0}
\newcommand{\Z}{{\mathbb Z}}
\newtheorem{thm}{Theorem}
\newtheorem{theorem}[thm]{Theorem}
\newtheorem{lemma}{Lemma}
\title{A new Fibonacci identity and its associated summation identities}
\author[]{Kunle Adegoke \\\href{mailto:adegoke00@gmail.com}{\tt adegoke00@gmail.com}}
\affil{Department of Physics and Engineering Physics, \mbox{Obafemi Awolowo University}, 220005 Ile-Ife, Nigeria}
\begin{document}

\date{}

\maketitle

\begin{abstract}
\noindent We derive a new Fibonacci identity. This single identity subsumes important known identities such as those of Catalan, Ruggles, Halton and others, as well as standard general identities found in the books by Vajda, Koshy and others. We also derive several binomial and ordinary summation identities arising from this identity; in particular we obtain a generalization of Halton's general Fibonacci identity.

\end{abstract}
\section{Introduction}
As usual, the Fibonacci numbers, $F_n$, and the Lucas numbers, $L_n$, $n\in\Z$, are defined by:
\begin{equation}
F_0  = 0,\;F_1  = 1,\;F_n  = F_{n - 1}  + F_{n - 2}\; (n\ge 2),\quad F_{ - n}  = ( - 1)^{n-1} F_n
\end{equation}
and
\begin{equation}
L_0  = 2,\;L_1  = 1,\;L_n  = L_{n - 1}  + L_{n - 2}\; (n\ge 2) ,\quad L_{ - n}  = ( - 1)^n L_n\,.
\end{equation}
Both $(F_n)_{n\in\Z}$ and $(L_n)_{n\in\Z}$ are examples of a Fibonacci-like sequence. We define a Fibonacci-like sequence, $(G_n)_{n\in\Z}$, as one having the same recurrence relation as the Fibonacci sequence, but with arbitrary initial terms. Thus, given arbitrary integers $G_0$ and $G_1$, not both zero, we define
\begin{equation}
G_n=G_{n-1}+G_{n-2}\; (n\ge 2)\,;
\end{equation}
and also extend the definition to negative subscripts by writing the recurrence relation as
\begin{equation}\label{eq.ozw7j10}
\quad G_{-n}=G_{-n+2}-G_{-n+1}\,.
\end{equation}
In section \ref{sec.identity}, it will be shown that
\begin{equation}
G_{-n}=(-1)^n(L_nG_0-G_n)\,.
\end{equation}
In this paper, we will derive the following identity involving Fibonacci numbers and Fibonacci-like numbers: 
\[
F_{a-b}G_{n+m}=F_{m-b}G_{n+a}+(-1)^{a+b+1}F_{m-a}G_{n+b}\,,
\]
valid for all integers $a$, $b$, $n$ and $m$.

\medskip

Various summation identities emanating from this identity will be derived. In particular, we will derive (section \ref{sec.summation}, identity \eqref{eq.kh2azr9})the following generalization of Halton's identity (see Halton \cite[identity (23)]{halton65}):
\[
\sum_{j = 0}^k {( - 1)^j \binom kjF_{a - b}^j F_{m + a}^{k - j} G_{n + (a - b)k + (m + b)j} }  = ( - 1)^{(a + b)k} F_{m + b}^k G_n\,.
\]
\section{An identity involving Fibonacci and Fibonacci-like \mbox{numbers}}\label{sec.identity}
\begin{theorem}\label{thm.main}
The following identity holds for arbitrary integers $a$, $b$, $m$ and $n$:
\[
\boxed {F_{a-b}G_{n+m}=F_{m-b}G_{n+a}+(-1)^{a+b+1}F_{m-a}G_{n+b}}\,.
\]

\end{theorem}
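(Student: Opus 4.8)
The plan is to use the Binet-type representation of Fibonacci-like sequences together with a linearity argument, so that the whole statement reduces to a single elementary check. Let $\alpha=(1+\sqrt5)/2$ and $\beta=(1-\sqrt5)/2$ be the roots of $x^2=x+1$, so that $\alpha+\beta=1$, $\alpha\beta=-1$ and $F_k=(\alpha^k-\beta^k)/\sqrt5$ for all $k\in\Z$. Since the recurrence $G_k=G_{k-1}+G_{k-2}$ has a two-dimensional solution space spanned by the two independent sequences $(\alpha^k)$ and $(\beta^k)$, every Fibonacci-like sequence can be written as $G_k=A\alpha^k+B\beta^k$ for suitable real constants $A,B$ determined by $G_0,G_1$.

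The key observation is that, for fixed $a,b,m,n$, the quantity
\[
\Phi(G):=F_{a-b}G_{n+m}-F_{m-b}G_{n+a}-(-1)^{a+b+1}F_{m-a}G_{n+b}
\]
is a linear functional of the sequence $(G_k)$. Hence $\Phi(G)=0$ for every Fibonacci-like $G$ as soon as it vanishes on the two spanning sequences $(\alpha^k)$ and $(\beta^k)$. Moreover, replacing $G_k=\alpha^k$ by $G_k=\beta^k$ and simultaneously swapping $\alpha\leftrightarrow\beta$ only negates the single $F$-factor present in each term, so the case $G_k=\beta^k$ follows from the case $G_k=\alpha^k$. It therefore suffices to treat $G_k=\alpha^k$ alone.

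For $G_k=\alpha^k$, after dividing through by $\alpha^n$ the claim becomes
\[
F_{a-b}\alpha^m=F_{m-b}\alpha^a+(-1)^{a+b+1}F_{m-a}\alpha^b .
\]
Here I would substitute $F_k=(\alpha^k-\beta^k)/\sqrt5$, clear the $\sqrt5$, and eliminate both the sign factor and the $\beta$-powers using $\alpha\beta=-1$, i.e. $\beta^k=(-1)^k\alpha^{-k}$ and $(-1)^{a+b+1}=-(\alpha\beta)^{a+b}$. The pure power $\alpha^{m+a-b}$ then cancels between the two sides, and the three remaining terms regroup (after simplifying the parities of the various $(-1)$-exponents) into a single cancelling pair, yielding $0$.

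The routine part is the exponent and sign bookkeeping in this last step; the only place demanding care — and the main obstacle — is keeping the parity identities straight (e.g. $(-1)^{a-b}=(-1)^{a+b}$) and making sure the conversion $\beta^k=(-1)^k\alpha^{-k}$ is applied consistently for negative indices, so that the argument is valid uniformly over all integers $a,b,m,n$ rather than only over a non-negative range. An alternative I would keep in reserve is to linearize via the shift identity $G_{n+k}=F_kG_{n+1}+F_{k-1}G_n$, which reduces the theorem to the two purely Fibonacci identities obtained by matching the coefficients of $G_{n+1}$ and $G_n$; these are in turn dispatched by the same Binet computation.
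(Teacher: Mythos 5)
Your argument is correct, and it takes a genuinely different route from the paper. You exploit linearity of the functional $\Phi$ in the sequence $(G_k)$ and check it on the Binet basis $\{(\alpha^k),(\beta^k)\}$ of the solution space; the verification for $(\alpha^k)$ reduces, after dividing by $\alpha^n$ and substituting $F_k=(\alpha^k-\beta^k)/\sqrt5$, to cancellations governed by $\alpha\beta=-1$, and I confirm the bookkeeping closes: the $\alpha^{m+a-b}$ terms cancel, $-\beta^{a-b}\alpha^m$ matches $(-1)^{a+b+1}\alpha^{m+b-a}$ via $\beta^{a-b}=(-1)^{a+b}\alpha^{b-a}$, and the remaining pair cancels via $\beta^{m-b}\alpha^a=(-1)^{a+b}\beta^{m-a}\alpha^b$; your symmetry argument for $(\beta^k)$ (each term carries exactly one $F$-factor, which is antisymmetric under $\alpha\leftrightarrow\beta$) is also sound. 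The paper instead uses two-dimensionality along a different axis: it regards $G_{n+m}$ as a Fibonacci-recurrent sequence in the index $m$ and expands it in the adapted basis $\{F_{m-b},F_{m-a}\}$, so that evaluating at $m=a$ and $m=b$ determines the coefficients with no Binet computation at all, the only arithmetic being $F_{b-a}=(-1)^{a+b+1}F_{a-b}$. The paper's route is shorter but must separately dispose of the degenerate case $a=b$ (where $F_{a-b}=0$ and the basis collapses), and it tacitly uses independence of $F_{m-b}$ and $F_{m-a}$ for $a\ne b$; your route is more mechanical and uniform in all parameters, needing no case split, at the cost of the sign-and-exponent bookkeeping you correctly flag as the only delicate point. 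Your reserve alternative, matching coefficients in $G_{n+k}=F_kG_{n+1}+F_{k-1}G_n$, is essentially a third valid variant of the same two-dimensionality idea.
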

\begin{proof}
Since both sequences $(F_n)$ and $(G_n)$ have the same recurrence relation, we choose a basis set in $(F_n)$ and express the numbers from $(G_n)$ in this basis. We write
\begin{equation}\label{eq.nbl33qg}
G_{n+m}=\lambda_1F_{m-b}+\lambda_2F_{m-a}\,,
\end{equation}
where $a$, $b$, $n$ and $m$ are arbitrary integers and the coefficients $\lambda_1$ and $\lambda_2$ are to be determined. Setting $m=a$ and $m=b$, in turn, gives
\begin{equation}\label{eq.jcs53o2}
G_{n + a}  = \lambda_1 F_{a - b},\quad G_{n + b}  = \lambda_2 F_{b - a}\,.
\end{equation}
Multiplying through identity \eqref{eq.nbl33qg} by $F_{a-b}F_{b-a}$ gives
\begin{equation}
F_{a - b} F_{b - a} G_{n + m}  = \underbrace{\lambda _1 F_{a - b}}_{G_{n + a}} F_{b - a} F_{m - b}  + \underbrace{\lambda _2 F_{b - a}}_{G_{n + b}} F_{a - b} F_{m - a}\,.
\end{equation}
Thus, we find
\[
\begin{split}
F_{a - b} F_{b - a} G_{n + m}  &= F_{b - a} F_{m - b} G_{n + a}  + F_{a - b} F_{m - a} G_{n + b}\\
&= F_{b - a} F_{m - b} G_{n + a}  + ( - 1)^{a + b + 1} F_{b - a} F_{m - a} G_{n + b}\,;
\end{split}
\]
so that the identity of the theorem is satisfied identically if $a=b$ and numerically if $a\ne b$.
\end{proof}
Since the left hand side of the identity of Theorem \ref{thm.main} does not change under the interchange of $m$ and $n$ and the interchange of $a$ and $-b$ and $b$ and $-a$, we also have the following identities:
\begin{equation}\label{eq.yvmxj6w}
F_{a - b} G_{n + m}  = F_{m + a} G_{n - b}  + ( - 1)^{a + b + 1} F_{m + b} G_{n - a}\,,
\end{equation}
\begin{equation}\label{eq.e1yqzuf}
F_{a - b} G_{n + m}  = F_{n - b} G_{m + a}  + ( - 1)^{a + b + 1} F_{n - a} G_{m + b}
\end{equation}
and
\begin{equation}\label{eq.h84fbk1}
F_{a-b}G_{n+m}=F_{n+a}G_{m-b}+(-1)^{a+b+1}F_{n+b}G_{m-a}\,.
\end{equation}
If we set $a=0=m$, $b=-n$ in identity \eqref{eq.e1yqzuf} and use the fact that $F_{2n}=F_nL_n$, we have
\[
G_{-n}=(-1)^n(L_nG_0-G_n)\,,
\]
providing a direct access to negative-index Fibonacci-like numbers.

\medskip

The presumably new identity in Theorem \ref{thm.main} includes, as particular cases, most known three-term recurrence relations involving Fibonacci numbers, Lucas numbers and the generalized Fibonacci numbers. We will give a couple of examples to illustrate this.

\medskip

Setting $a=0$ and $b=m-n$ in the identity of Theorem \ref{thm.main} gives
\begin{equation}
F_{n-m}G_{n+m}=F_nG_n+(-1)^{n+m+1}F_mG_m\,,
\end{equation}
which is a generalization of Catalan's identity:
\begin{equation}
F_{n-m}F_{n+m}=F_n^2+(-1)^{n+m+1}F_m^2\,.
\end{equation}
Using $m=0$ and $a=c+b$ in the identity and re-arranging the terms, we find
\begin{equation}
F_{c+b}G_{n+b}-F_bG_{n+b+c}=(-1)^{b+1}F_cG_n\,,
\end{equation}
which is a generalization of Vajda \cite[Formulas (19a) and (19b)]{vajda}.

\medskip

Setting $a=0$ and $b=-m$ in the identity of Theorem \ref{thm.main} gives
\begin{equation}
G_{n+m}+(-1)^mG_{n-m}=L_{m}G_n\,,
\end{equation}
which is Vajda \cite[Formula 10a]{vajda}.

\medskip
Setting $b=0$, $a=k$ and $m=2k$ in the identity of Theorem \ref{thm.main} gives
\begin{equation}
F_{n+2k}=L_{k}F_{n+k}+(-1)^{n+k}F_kF_n\,,
\end{equation}
which is Ruggles' identity \cite{cooper17,ruggles63}.

\medskip

Setting $b=-a$ in the identity of Theorem \ref{thm.main} gives
\begin{equation}
F_{2a}G_{n+m}=F_{m+a}G_{n+a}-F_{m-a}G_{n-a}\,,
\end{equation}
with the special case
\begin{equation}
G_{n+m}=F_{m+1}G_{n+1}-F_{m-1}G_{n-1}\,,
\end{equation}
which is a generalization of the following identity (Halton \cite[Identity (63)]{halton65}, Koshy \cite[Identity (44), page 89]{koshy}):
\begin{equation}
F_{n+m}=F_{m+1}F_{n+1}-F_{m-1}F_{n-1}\,.
\end{equation}
Setting $b=2k$, $a=1$ and $b=2k$, $a=0$, in turn, in the identity of Theorem \ref{thm.main} produces
\begin{equation}
F_{2k - 1} G_{n + m}  = F_{m - 2k} G_{n + 1}  + F_{m - 1} G_{n + 2k}
\end{equation}
and
\begin{equation}\label{eq.m2jtlr3}
F_{2k} G_{n + m}  = F_m G_{n + 2k}  - F_{m - 2k} G_n\,.
\end{equation}
Identity \eqref{eq.m2jtlr3} is a generalization of the following well-known addition formula (Vajda \cite[Formula (8)]{vajda}):
\begin{equation}
G_{n + m}  = F_{m - 1} G_{n}  + F_{m} G_{n + 1}\,.
\end{equation}
Setting $a=n$ and $b=-m$ in the identity of Theorem \ref{thm.main} produces
\begin{equation}
F_{2m} G_{2n}  = F_{n + m} G_{n + m}  - F_{n - m} G_{n - m}\,.
\end{equation}
\section{Summation identities involving Fibonacci and \mbox{Fibonacci-like} numbers}\label{sec.summation}
\subsection{Binomial summation identities}
\begin{lemma}[{\cite[Lemma 3]{adegoke18}}]\label{lem.binomial}
Let $(X_n)$ be any arbitrary sequence. Let $X_n$, $n\in\Z$, satisfy a three-term recurrence relation $hX_n=f_1X_{n-\alpha}+f_2X_{n-\beta}$, where $h$, $f_1$ and $f_2$ are non-vanishing complex functions, not dependent on $n$, and $\alpha$ and $\beta$ are integers. Then,
\begin{equation}\label{eq.fe496kc}
\sum_{j = 0}^k {\binom kjf_2^{k - j} f_1^j X_{n - \beta k + (\beta  - \alpha )j} }  = h^k X_n\,,
\end{equation}
\begin{equation}\label{eq.j7k6a8g}
\sum_{j = 0}^k {( - 1)^j \binom kjf_2^{k - j} h^j X_{n + (\alpha  - \beta )k + \beta j} }  = ( - 1)^k f_1^k X_n
\end{equation}
and
\begin{equation}\label{eq.fnwrzi3}
\sum_{j = 0}^k {( - 1)^j \binom kjf_1^{k - j} h^j X_{n + (\beta  - \alpha )k + \alpha j} }  = ( - 1)^k f_2^k X_n\,,
\end{equation}
for $k$ a non-negative integer.

\end{lemma}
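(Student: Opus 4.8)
The plan is to recast the three-term recurrence as an operator identity acting on the sequence $(X_n)$ and then invoke the binomial theorem for commuting operators. I introduce the shift operator $E$, acting by $E^s X_n = X_{n+s}$ for every integer $s$; since the hypothesis stipulates that $h$, $f_1$ and $f_2$ are independent of $n$, the relation $hX_n = f_1 X_{n-\alpha} + f_2 X_{n-\beta}$ holds at \emph{every} index and may therefore be written as the operator equation $h\,X_n = (f_1 E^{-\alpha} + f_2 E^{-\beta})X_n$, understood to be valid when applied to $X$ at any argument. This re-reading is the only genuinely delicate point: because the recurrence is an identity in $n$, iterated substitution is legitimate, and because the coefficients are constants they commute with all powers of $E$ (which of course commute among themselves).

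Granting this, the first identity is immediate. Raising the operator to the $k$th power and expanding by the binomial theorem,
\[
h^k X_n = (f_1 E^{-\alpha} + f_2 E^{-\beta})^k X_n = \sum_{j=0}^k \binom kj f_1^j f_2^{k-j} X_{n-\alpha j - \beta(k-j)}\,,
\]
and the exponent simplifies as $-\alpha j - \beta(k-j) = -\beta k + (\beta-\alpha)j$, which is exactly \eqref{eq.fe496kc}.

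For \eqref{eq.j7k6a8g} I would first solve the recurrence for the $f_1$ term, replacing $n$ by $n+\alpha$ to obtain $f_1 X_n = (hE^{\alpha} - f_2 E^{\alpha-\beta})X_n$. Raising this to the $k$th power and expanding gives $f_1^k X_n = \sum_{j} \binom kj h^j (-f_2)^{k-j} X_{n+\alpha j+(\alpha-\beta)(k-j)}$; multiplying through by $(-1)^k$ converts the sign $(-1)^{k}(-1)^{k-j}$ into $(-1)^j$, while the exponent collapses to $(\alpha-\beta)k + \beta j$, yielding the claim. The third identity \eqref{eq.fnwrzi3} is obtained identically after instead solving for the $f_2$ term, $f_2 X_n = (hE^{\beta} - f_1 E^{\beta-\alpha})X_n$; indeed it is just \eqref{eq.j7k6a8g} under the simultaneous interchange $\alpha\leftrightarrow\beta$, $f_1\leftrightarrow f_2$, which leaves the hypothesis invariant. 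The base case $k=0$ of each sum is trivially $X_n = X_n$, so no separate verification is needed beyond the operator expansion.
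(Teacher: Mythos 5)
Your argument is correct. Note first that the paper you are being compared against does not prove this lemma at all: it is imported verbatim by citation from \cite[Lemma 3]{adegoke18}, so there is no in-paper proof to match against, and a self-contained derivation like yours is a genuine addition rather than a rederivation. Your operator argument is sound: since $h$, $f_1$, $f_2$ are constants and the recurrence holds at every index, the relations $hX_n=(f_1E^{-\alpha}+f_2E^{-\beta})X_n$, $f_1X_n=(hE^{\alpha}-f_2E^{\alpha-\beta})X_n$ and $f_2X_n=(hE^{\beta}-f_1E^{\beta-\alpha})X_n$ each iterate (by linearity and the fact that the scalar produced at each stage commutes with the operator), and the binomial expansion of the $k$th power is legitimate because the two summands in each operator are commuting compositions of scalars and shifts. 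I checked the exponent bookkeeping: $-\alpha j-\beta(k-j)=-\beta k+(\beta-\alpha)j$ for \eqref{eq.fe496kc}, and $\alpha j+(\alpha-\beta)(k-j)=(\alpha-\beta)k+\beta j$ together with $(-1)^k(-1)^{k-j}=(-1)^j$ for \eqref{eq.j7k6a8g}; the symmetry $\alpha\leftrightarrow\beta$, $f_1\leftrightarrow f_2$ then gives \eqref{eq.fnwrzi3} exactly as you say. The one point worth stating slightly more explicitly is the induction step justifying $(f_1E^{-\alpha}+f_2E^{-\beta})^kX_n=h^kX_n$ (apply the operator to a finite linear combination of terms $X_{m}$ and pull out the scalar $h$ from each), but this is the content of your remark that iterated substitution is legitimate, so the proof stands as written.
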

\begin{theorem}\label{thm.tutbc4x}
The following identities hold for positive integer $k$ and arbitrary integers $a$, $b$, $n$, $m$:
\begin{equation}\label{eq.a4qiltd}
\sum_{j = 0}^k {( - 1)^{(a + b + 1)(k - j)} \binom kjF_{m - b}^j F_{m - a}^{k - j} G_{n - (m - b)k + (a - b)j} }  = F_{a - b}^k G_n\,,
\end{equation}
\begin{equation}
\sum_{j = 0}^k {( - 1)^{(a + b)j} \binom kjF_{a - b}^j F_{m - a}^{k - j} G_{n - (a - b)k + (m - b)j} }  = ( - 1)^{(a + b)k} F_{m - b}^k G_n\,,
\end{equation}
\begin{equation}\label{eq.kf3kgmr}
\sum_{j = 0}^k {( - 1)^j \binom kjF_{a - b}^j F_{m - b}^{k - j} G_{n + (a - b)k + (m - a)j} }  = ( - 1)^{(a + b)k} F_{m - a}^k G_n\,,
\end{equation}
\begin{equation}\label{eq.sa53jkd}
\sum_{j = 0}^k {( - 1)^{(a + b + 1)(k - j)} \binom kjF_{m + a}^j F_{m + b}^{k - j} G_{n - (m + a)k + (a - b)j} }  = F_{a - b}^k G_n\,,
\end{equation}
\begin{equation}
\sum_{j = 0}^k {( - 1)^{(a+b)j} \binom kjF_{a - b}^j F_{m + b}^{k - j} G_{n - (a - b)k + (m + a)j} }  = ( - 1)^{(a + b)k} F_{m + a}^k G_n
\end{equation}
and
\begin{equation}\label{eq.kh2azr9}
\sum_{j = 0}^k {( - 1)^j \binom kjF_{a - b}^j F_{m + a}^{k - j} G_{n + (a - b)k + (m + b)j} }  = ( - 1)^{(a + b)k} F_{m + b}^k G_n\,.
\end{equation}

\end{theorem}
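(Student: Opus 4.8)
The plan is to realize each of the six summation identities as a single application of Lemma~\ref{lem.binomial}, fed by a three-term recurrence read off from Theorem~\ref{thm.main} (for the first three) and from its companion \eqref{eq.yvmxj6w} (for the last three). The key observation is that the identity of Theorem~\ref{thm.main},
\[
F_{a-b}G_{n+m}=F_{m-b}G_{n+a}+(-1)^{a+b+1}F_{m-a}G_{n+b},
\]
holds for every integer $n$ with $a,b,m$ held fixed. Substituting $n\mapsto\nu-m$ recasts it as a relation on the single sequence $(G_\nu)$,
\[
F_{a-b}\,G_\nu=F_{m-b}\,G_{\nu-(m-a)}+(-1)^{a+b+1}F_{m-a}\,G_{\nu-(m-b)},
\]
which is exactly the hypothesis $hX_\nu=f_1X_{\nu-\alpha}+f_2X_{\nu-\beta}$ of Lemma~\ref{lem.binomial} with
\[
h=F_{a-b},\quad f_1=F_{m-b},\quad f_2=(-1)^{a+b+1}F_{m-a},\quad \alpha=m-a,\quad \beta=m-b.
\]

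First I would feed these data into the three conclusions \eqref{eq.fe496kc}, \eqref{eq.j7k6a8g}, \eqref{eq.fnwrzi3} of the lemma. Since $\beta-\alpha=(m-b)-(m-a)=a-b$ and $\alpha-\beta=-(a-b)$, the index shifts $-\beta k+(\beta-\alpha)j$, $(\alpha-\beta)k+\beta j$ and $(\beta-\alpha)k+\alpha j$ collapse precisely to $-(m-b)k+(a-b)j$, $-(a-b)k+(m-b)j$ and $(a-b)k+(m-a)j$, i.e.\ the shifts appearing in \eqref{eq.a4qiltd}, the second identity, and \eqref{eq.kf3kgmr}. Relabelling $\nu\mapsto n$ then yields the first three identities verbatim.

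Next I would run the same machine on \eqref{eq.yvmxj6w}. The substitution $n\mapsto\nu-m$ turns it into
\[
F_{a-b}\,G_\nu=F_{m+a}\,G_{\nu-(m+b)}+(-1)^{a+b+1}F_{m+b}\,G_{\nu-(m+a)},
\]
so now $h=F_{a-b}$, $f_1=F_{m+a}$, $f_2=(-1)^{a+b+1}F_{m+b}$, $\alpha=m+b$, $\beta=m+a$, and again $\beta-\alpha=a-b$. Plugging these into \eqref{eq.fe496kc}, \eqref{eq.j7k6a8g}, \eqref{eq.fnwrzi3} produces \eqref{eq.sa53jkd}, the fifth identity, and the featured identity \eqref{eq.kh2azr9}.

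The one point demanding care is the sign bookkeeping, since the sign in $f_2=(-1)^{a+b+1}F_{m-a}$ (or $F_{m+b}$) interacts with the signs intrinsic to the lemma. One expands $f_2^{k-j}=(-1)^{(a+b+1)(k-j)}F_{m-a}^{k-j}$ and $f_2^{k}=(-1)^{(a+b+1)k}F_{m-a}^{k}$, then uses $(-1)^{(a+b+1)t}=(-1)^{(a+b)t}(-1)^{t}$ together with $(-1)^{(a+b)(k-j)}=(-1)^{(a+b)k}(-1)^{(a+b)j}$. In \eqref{eq.j7k6a8g} the intrinsic $(-1)^{j}$ combines with the $(-1)^{k-j}$ split off from $f_2^{k-j}$ to give a global $(-1)^{k}$ that cancels the $(-1)^{k}$ on the right, leaving the weight $(-1)^{(a+b)j}$ and a factor $(-1)^{(a+b)k}$ pulled outside the sum; in \eqref{eq.fnwrzi3} the intrinsic $(-1)^{k}$ instead merges with the $(-1)^{(a+b+1)k}$ from $f_2^{k}$ to yield $(-1)^{(a+b)k}$. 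I expect this reconciliation of the three sign sources to be the only real obstacle; everything else is direct substitution. A final remark is that Lemma~\ref{lem.binomial} nominally requires $h,f_1,f_2$ nonvanishing, i.e.\ $a\ne b$ and $m\notin\{a,b,-a,-b\}$; the excluded cases follow either by a direct check or by observing that each displayed identity is polynomial in the Fibonacci values and hence persists by continuity.
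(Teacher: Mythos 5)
Your proposal is correct and follows essentially the same route as the paper: recast Theorem \ref{thm.main} as the three-term recurrence $F_{a-b}G_n=F_{m-b}G_{n-(m-a)}+(-1)^{a+b+1}F_{m-a}G_{n-(m-b)}$ with $h=F_{a-b}$, $f_1=F_{m-b}$, $f_2=(-1)^{a+b+1}F_{m-a}$, $\alpha=m-a$, $\beta=m-b$, and apply the three parts of Lemma \ref{lem.binomial}. Your handling of the last three identities (running the lemma on \eqref{eq.yvmxj6w} directly rather than substituting $a\to-b$, $b\to-a$ into the first three results) is the same step in different clothing, and your attention to the sign bookkeeping and to the degenerate cases excluded by the lemma's nonvanishing hypothesis is, if anything, more careful than the paper's.
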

\begin{proof}
To derive identities \eqref{eq.a4qiltd} -- \eqref{eq.kf3kgmr}, write the identity of Theorem \ref{thm.main} as
\[
F_{a - b} G_n  = F_{m - b} G_{n - (m - a)}  + ( - 1)^{a + b + 1} F_{m - a} G_{n - (m - b)}\,;
\]
identify $h=F_{a-b}$, $f_1=F_{m-b}$, $f_2=(-1)^{a+b+1}F_{m-a}$, $X_n=G_n$, $\alpha=m-a$, $\beta=m-b$ and use these in Lemma \ref{lem.binomial}. Identities \eqref{eq.sa53jkd} -- \eqref{eq.kh2azr9} are obtained from identities \eqref{eq.a4qiltd} -- \eqref{eq.kf3kgmr} by interchanging $a$ and $-b$ and $b$ and $-a$.
\end{proof}
Particular cases of identities \eqref{eq.a4qiltd} -- \eqref{eq.kh2azr9} are the pure Fibonacci binomial summation identities
\begin{equation}
\sum_{j = 0}^k {( - 1)^{(a + b + 1)(k - j)} \binom kjF_{m - b}^j F_{m - a}^{k - j} F_{n - (m - b)k + (a - b)j} }  = F_{a - b}^k F_n\,,
\end{equation}
\begin{equation}
\sum_{j = 0}^k {( - 1)^{(a + b)j} \binom kjF_{a - b}^j F_{m - a}^{k - j} F_{n - (a - b)k + (m - b)j} }  = ( - 1)^{(a + b)k} F_{m - b}^k F_n\,,
\end{equation}
\begin{equation}
\sum_{j = 0}^k {( - 1)^j \binom kjF_{a - b}^j F_{m - b}^{k - j} F_{n + (a - b)k + (m - a)j} }  = ( - 1)^{(a + b)k} F_{m - a}^k F_n\,,
\end{equation}
\begin{equation}
\sum_{j = 0}^k {( - 1)^{(a + b + 1)(k - j)} \binom kjF_{m + a}^j F_{m + b}^{k - j} F_{n - (m + a)k + (a - b)j} }  = F_{a - b}^k F_n\,,
\end{equation}
\begin{equation}
\sum_{j = 0}^k {( - 1)^{(a+b)j} \binom kjF_{a - b}^j F_{m + b}^{k - j} F_{n - (a - b)k + (m + a)j} }  = ( - 1)^{(a + b)k} F_{m + a}^k F_n
\end{equation}
and
\begin{equation}\label{eq.gz6ate0}
\sum_{j = 0}^k {( - 1)^j \binom kjF_{a - b}^j F_{m + a}^{k - j} F_{n + (a - b)k + (m + b)j} }  = ( - 1)^{(a + b)k} F_{m + b}^k F_n\,;
\end{equation}
and the corresponding identities involving both Fibonacci and Lucas numbers:
\begin{equation}
\sum_{j = 0}^k {( - 1)^{(a + b + 1)(k - j)} \binom kjF_{m - b}^j F_{m - a}^{k - j} L_{n - (m - b)k + (a - b)j} }  = F_{a - b}^k L_n\,,
\end{equation}
\begin{equation}
\sum_{j = 0}^k {( - 1)^{(a + b)j} \binom kjF_{a - b}^j F_{m - a}^{k - j} L_{n - (a - b)k + (m - b)j} }  = ( - 1)^{(a + b)k} F_{m - b}^k L_n\,,
\end{equation}
\begin{equation}
\sum_{j = 0}^k {( - 1)^j \binom kjF_{a - b}^j F_{m - b}^{k - j} L_{n + (a - b)k + (m - a)j} }  = ( - 1)^{(a + b)k} F_{m - a}^k L_n\,,
\end{equation}
\begin{equation}
\sum_{j = 0}^k {( - 1)^{(a + b + 1)(k - j)} \binom kjF_{m + a}^j F_{m + b}^{k - j} L_{n - (m + a)k + (a - b)j} }  = F_{a - b}^k L_n\,,
\end{equation}
\begin{equation}
\sum_{j = 0}^k {( - 1)^{(a+b)j} \binom kjF_{a - b}^j F_{m + b}^{k - j} L_{n - (a - b)k + (m + a)j} }  = ( - 1)^{(a + b)k} F_{m + a}^k L_n
\end{equation}
and
\begin{equation}
\sum_{j = 0}^k {( - 1)^j \binom kjF_{a - b}^j F_{m + a}^{k - j} L_{n + (a - b)k + (m + b)j} }  = ( - 1)^{(a + b)k} F_{m + b}^k L_n\,.
\end{equation}
We remark that Halton's identity \cite[Identity 23]{halton65}, from which he derived a very large number of identities of different kinds, involving the Fibonacci numbers, is a particular case of identity \eqref{eq.gz6ate0}, being an evaluation at $b=0$.
\subsection{Non-binomial summation identities}
\begin{lemma}[{\cite[Lemma 1]{adegoke18}}]\label{lem.u4bqbkc}
Let $(X_n)$ and $(Y_n)$ be any two sequences such that $X_n$ and $Y_n$, $n\in\Z$, are connected by a three-term recurrence relation $hX_n=f_1X_{n-\alpha}+f_2Y_{n-\beta}$, where $h$, $f_1$ and $f_2$ are arbitrary non-vanishing complex functions, not dependent on $n$, and $\alpha$ and $\beta$ are integers. Then, the following identity holds for integer $k$:
\[
f_2 \sum_{j = 0}^k {f_1^{k - j} h^j Y_{n - k\alpha  - \beta  + \alpha j} }  = h^{k + 1} X_n  - f_1^{k + 1} X_{n - (k + 1)\alpha }\,. 
\]

\end{lemma}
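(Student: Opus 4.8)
The plan is to prove this by a telescoping argument that feeds the defining recurrence back into each term of the sum. The key observation is that the index appearing in $Y$ can be rewritten in a form matching the recurrence exactly. Setting $M_j = n - (k-j)\alpha$, the $Y$-argument is $n - k\alpha - \beta + \alpha j = M_j - \beta$, so the hypothesis $hX_m = f_1 X_{m-\alpha} + f_2 Y_{m-\beta}$, applied at $m = M_j$, gives $f_2 Y_{M_j - \beta} = h X_{M_j} - f_1 X_{M_j - \alpha}$. The crucial arithmetic fact driving the telescoping is that $M_j - \alpha = n - (k-j+1)\alpha = M_{j-1}$.

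First I would carry $f_2$ inside the sum and substitute this expression, obtaining
\[
f_2 \sum_{j=0}^k f_1^{k-j} h^j Y_{M_j - \beta} = \sum_{j=0}^k f_1^{k-j} h^{j+1} X_{M_j} - \sum_{j=0}^k f_1^{k-j+1} h^{j} X_{M_{j-1}}\,.
\]
Next I would reindex the second sum via $i = j-1$, which turns its summand into $f_1^{k-i} h^{i+1} X_{M_i}$ with $i$ ranging over $-1, \dots, k-1$. Peeling off the $j=k$ term of the first sum, namely $h^{k+1} X_{M_k} = h^{k+1} X_n$ since $M_k = n$, and the $i=-1$ term of the reindexed second sum, namely $f_1^{k+1} X_{M_{-1}} = f_1^{k+1} X_{n-(k+1)\alpha}$, leaves two identical sums over $j = 0, \dots, k-1$ which cancel.

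What remains is exactly $h^{k+1} X_n - f_1^{k+1} X_{n-(k+1)\alpha}$, the claimed right-hand side. I do not anticipate any genuine obstacle; the only delicate point is the index bookkeeping, i.e.\ confirming $M_j - \alpha = M_{j-1}$ and tracking the two boundary terms $M_k = n$ and $M_{-1} = n-(k+1)\alpha$ with the correct signs after the subtraction. An equivalent route is induction on $k$, with the base case $k=0$ being the recurrence itself and the inductive step again reducing to applying the recurrence to the single new term and recombining; but I would favour the telescoping presentation, since it exposes the cancellation in a single pass.
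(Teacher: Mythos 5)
Your telescoping argument is correct: with $M_j = n-(k-j)\alpha$ the identities $M_j-\beta = n-k\alpha-\beta+\alpha j$, $M_j-\alpha = M_{j-1}$, $M_k=n$ and $M_{-1}=n-(k+1)\alpha$ all check out, and the cancellation leaves exactly $h^{k+1}X_n - f_1^{k+1}X_{n-(k+1)\alpha}$. There is nothing in the paper to compare against: the lemma is imported verbatim from \cite[Lemma 1]{adegoke18} with no proof given here, so your self-contained derivation (which is the standard way to establish such weighted telescoping sums, and implicitly reads ``integer $k$'' as $k\ge 0$ so that the sum is nonempty) fills that gap correctly.
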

\begin{theorem}\label{thm.c2gyoa1}
The following identities hold for $a$, $b$, $m$, $n$ and $k$ arbitrary integers:
\begin{equation}\label{eq.hlcov46}
\begin{split}
&F_{a - b} \sum_{j = 0}^k {( - 1)^{(a + b)j} G_{m + b}^{k - j} G_{m + a}^j G_{n - (a - b)k + m + b + (a - b)j} }\\
&\qquad= ( - 1)^{(a + b)k} F_n G_{m + a}^{k + 1}  + ( - 1)^{a + b + 1} F_{n - (a - b)(k + 1)} G_{m + b}^{k + 1}
\end{split}
\end{equation}
and
\begin{equation}\label{eq.ao5nh45}
\begin{split}
&F_{a - b} \sum_{j = 0}^k {( - 1)^{(a + b)j} G_{m - a}^{k - j} G_{m - b}^j G_{n - (a - b)k + m - a + (a - b)j} }\\
&\qquad= ( - 1)^{(a + b)k} F_n G_{m - b}^{k + 1}  + ( - 1)^{a + b + 1} F_{n - (a - b)(k + 1)} G_{m - a}^{k + 1}\,.
\end{split}
\end{equation}

\end{theorem}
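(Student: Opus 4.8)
The plan is to recognize that each of \eqref{eq.hlcov46} and \eqref{eq.ao5nh45} is a direct application of Lemma \ref{lem.u4bqbkc}, but with the roles of the sequences chosen so that the \emph{Fibonacci} numbers play the part of the twice-occurring sequence $X_n$ and the Fibonacci-like numbers $G_n$ play the part of the once-occurring sequence $Y_n$. The coefficients $h$, $f_1$, $f_2$ will then be built out of $G$-values together with $F_{a-b}$, so that the summation in the lemma manufactures exactly the powers $G_{m+b}^{k-j}$ and $G_{m+a}^{j}$ appearing in the theorem. This is the mirror image of the choice made for the binomial identities, where the $F$-values were the coefficients and $G_n$ was the summed sequence.

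To set up \eqref{eq.hlcov46} I would start from the companion identity \eqref{eq.e1yqzuf}, since it is precisely the form of Theorem \ref{thm.main} whose $G$-factors are $G_{m+a}$ and $G_{m+b}$. I would solve it for the term $F_{n-b}G_{m+a}$ and then substitute $n \mapsto n+b$ to clear the shift on the leading $F$, arriving at a relation of the shape
\[
G_{m+a}\,F_n = (-1)^{a+b} G_{m+b}\,F_{n-(a-b)} + F_{a-b}\,G_{n+m+b}.
\]
Reading this against the hypothesis $hX_n = f_1 X_{n-\alpha} + f_2 Y_{n-\beta}$ of Lemma \ref{lem.u4bqbkc} gives the identifications $X_n = F_n$, $Y_n = G_n$, $h = G_{m+a}$, $f_1 = (-1)^{a+b}G_{m+b}$, $f_2 = F_{a-b}$, $\alpha = a-b$ and $\beta = -(m+b)$. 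Feeding these into the conclusion of the lemma reproduces the summand and the two right-hand terms of \eqref{eq.hlcov46} up to a uniform sign.

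The one place demanding care --- and the only real obstacle --- is reconciling the sign conventions. The lemma delivers a factor $f_1^{k-j} = (-1)^{(a+b)(k-j)}G_{m+b}^{k-j}$, whereas the theorem displays $(-1)^{(a+b)j}$; I would bridge this using $(-1)^{(a+b)(k-j)} = (-1)^{(a+b)k}(-1)^{(a+b)j}$ and then cancel the global factor $(-1)^{(a+b)k}$ against both sides, its square being $1$. The same manoeuvre turns the lemma's $-f_1^{k+1}X_{n-(k+1)\alpha}$ into $(-1)^{a+b+1}F_{n-(a-b)(k+1)}G_{m+b}^{k+1}$, using $(-1)^{(a+b)(2k+1)} = (-1)^{a+b}$, and rewrites $h^{k+1}X_n$ as $(-1)^{(a+b)k}F_n G_{m+a}^{k+1}$; checking that these two terms land exactly as written is the heart of the verification.

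Finally, \eqref{eq.ao5nh45} follows at once by the symmetry already noted for Theorem \ref{thm.main}: performing the interchange $a \leftrightarrow -b$, $b \leftrightarrow -a$ fixes $F_{a-b}$, $(-1)^{a+b}$ and $(-1)^{a+b+1}$, while sending $G_{m+a} \mapsto G_{m-b}$ and $G_{m+b} \mapsto G_{m-a}$, so \eqref{eq.hlcov46} is carried precisely onto \eqref{eq.ao5nh45}. Equivalently, one may run the same argument from scratch starting instead from identity \eqref{eq.h84fbk1}, whose $G$-factors are $G_{m-a}$ and $G_{m-b}$.
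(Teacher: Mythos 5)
Your proposal is correct and follows essentially the same route as the paper: the same rearranged three-term relation $G_{m+a}F_n=(-1)^{a+b}G_{m+b}F_{n-(a-b)}+F_{a-b}G_{n+m+b}$, the same application of Lemma \ref{lem.u4bqbkc} with $X_n=F_n$, $h=G_{m+a}$, $f_1=(-1)^{a+b}G_{m+b}$, $f_2=F_{a-b}$, $\alpha=a-b$ (your $Y_n=G_n$, $\beta=-(m+b)$ is equivalent to the paper's $Y_n=G_{n+m+b}$, $\beta=0$), and the same interchange $a\to-b$, $b\to-a$ for \eqref{eq.ao5nh45}. The only cosmetic differences are that you start from \eqref{eq.e1yqzuf} rather than \eqref{eq.yvmxj6w} (these are $m\leftrightarrow n$ relabelings of each other) and that you make explicit the sign bookkeeping $(-1)^{(a+b)(k-j)}=(-1)^{(a+b)k}(-1)^{(a+b)j}$ that the paper leaves implicit.
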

\begin{proof}
To prove identity \eqref{eq.hlcov46}, write identity \eqref{eq.yvmxj6w} as
\begin{equation}
G_{m + a} F_n  = ( - 1)^{a + b} G_{m + b} F_{n - (a - b)}  + F_{a - b} G_{n + m + b}\,,
\end{equation}
identify $h=G_{m+a}$, $f_1=(-1)^{a+b}G_{m+b}$, $f_2=F_{a-b}$, $X_n=F_n$, $Y_n=G_{n+m+b}$, $\alpha=a-b$ and $\beta=0$ and use these in Lemma \ref{lem.u4bqbkc}. Identity \eqref{eq.ao5nh45} is obtained from identity \eqref{eq.hlcov46} through the transformation $a\to -b$, $b\to -a$.
\end{proof}
\begin{lemma}[{\cite[Lemma 2]{adegoke18}}]\label{lem.s9jfs7n}
Let $(X_n)$ be any arbitrary sequence, where $X_n$, $n\in\Z$, satisfies a three-term recurrence relation $hX_n=f_1X_{n-\alpha}+f_2X_{n-\beta}$, where $h$, $f_1$ and $f_2$ are arbitrary non-vanishing complex functions, not dependent on $r$, and $\alpha$ and $\beta$ are integers. Then, the following identities hold for integer $k$:
\begin{equation}\label{eq.mxyb9zk}
f_2 \sum_{j = 0}^k {f_1^{k - j} h^j X_{n - k\alpha  - \beta  + \alpha j} }  = h^{k + 1} X_n  - f_1^{k + 1} X_{n - (k + 1)\alpha }\,,
\end{equation}
\begin{equation}\label{eq.cgldajj}
f_1 \sum_{j = 0}^k {f_2^{k - j} h^j X_{n - k\beta  - \alpha  + \beta j} }  = h^{k + 1} X_n  - f_2^{k + 1} X_{n - (k + 1)\beta }
\end{equation}
and
\begin{equation}\label{eq.n2n4ec3}
h\sum_{j = 0}^k {( - 1)^j f_2^{k - j} f_1 ^j X_{n - (\beta  - \alpha )k + \alpha  + (\beta  - \alpha )j} }  = ( - 1)^k f_1 ^{k + 1} X_n  + f_2^{k + 1} X_{n - (\beta  - \alpha )(k + 1)}\,.
\end{equation}

\end{lemma}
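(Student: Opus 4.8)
The plan is to prove all three identities by a single device: apply the defining recurrence once to each summand and recognise the resulting sum as telescoping. I would establish \eqref{eq.mxyb9zk} first and then read off the other two.

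For \eqref{eq.mxyb9zk} I would reindex the sum by $i=k-j$, rewriting the left-hand side as $f_2\sum_{i=0}^{k}f_1^{i}h^{k-i}X_{n-\beta-i\alpha}$. The only input is the recurrence rearranged as $f_2X_{m-\beta}=hX_m-f_1X_{m-\alpha}$; applying it at $m=n-i\alpha$ gives $f_2X_{n-\beta-i\alpha}=hX_{n-i\alpha}-f_1X_{n-(i+1)\alpha}$. Multiplying by $f_1^{i}h^{k-i}$ turns the $i$-th summand into $T_i-T_{i+1}$, where $T_i=f_1^{i}h^{k-i+1}X_{n-i\alpha}$. The sum then collapses to $T_0-T_{k+1}=h^{k+1}X_n-f_1^{k+1}X_{n-(k+1)\alpha}$, which is exactly the right-hand side. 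Identity \eqref{eq.cgldajj} then requires no further work: the hypothesis $hX_n=f_1X_{n-\alpha}+f_2X_{n-\beta}$ is invariant under the interchange $(f_1,\alpha)\leftrightarrow(f_2,\beta)$, and performing this interchange in \eqref{eq.mxyb9zk} produces \eqref{eq.cgldajj} verbatim.

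For the alternating identity \eqref{eq.n2n4ec3} I would use the same idea but with an additive rather than a subtractive collapse. Writing $\gamma=\beta-\alpha$ and reindexing by $i=k-j$, the left-hand side becomes $(-1)^k h\sum_{i=0}^{k}(-1)^{i}f_2^{i}f_1^{k-i}X_{n+\alpha-\gamma i}$. The useful recast of the recurrence is $hX_{p+\alpha}=f_1X_p+f_2X_{p-\gamma}$, obtained by putting $p=m-\alpha$ in the hypothesis; applied at $p=n-\gamma i$ it gives $hX_{n+\alpha-\gamma i}=f_1X_{n-\gamma i}+f_2X_{n-\gamma(i+1)}$, so that $f_2^{i}f_1^{k-i}\,hX_{n+\alpha-\gamma i}=d_i+d_{i+1}$ with $d_i=f_2^{i}f_1^{k-i+1}X_{n-\gamma i}$. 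The elementary identity $\sum_{i=0}^{k}(-1)^i(d_i+d_{i+1})=d_0+(-1)^k d_{k+1}$ then evaluates the inner sum to $f_1^{k+1}X_n+(-1)^k f_2^{k+1}X_{n-\gamma(k+1)}$, and multiplying through by the overall factor $(-1)^k$ recovers \eqref{eq.n2n4ec3}.

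The arithmetic is routine once the auxiliary sequences are in hand, so the only genuine obstacle is organisational: recognising that the alternating sum telescopes through the additive pattern $d_i+d_{i+1}$ (so the two end terms survive with a relative sign) rather than the subtractive pattern $T_i-T_{i+1}$ that handles the first two identities, and choosing the recast $hX_{p+\alpha}=f_1X_p+f_2X_{p-\gamma}$ so that consecutive summands share a common argument. I would also note that the identities are understood for $k\ge 0$, where the displayed sums are defined, with the case $k=0$ reducing in each instance to the defining recurrence itself.
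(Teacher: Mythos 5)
Your proof is correct: the reindexing $i=k-j$, the rearrangements $f_2X_{m-\beta}=hX_m-f_1X_{m-\alpha}$ and $hX_{p+\alpha}=f_1X_p+f_2X_{p-\gamma}$, the telescoping via $T_i-T_{i+1}$ for \eqref{eq.mxyb9zk} and the signed collapse $\sum_{i=0}^k(-1)^i(d_i+d_{i+1})=d_0+(-1)^kd_{k+1}$ for \eqref{eq.n2n4ec3} all check out, as does obtaining \eqref{eq.cgldajj} from \eqref{eq.mxyb9zk} by the symmetry $(f_1,\alpha)\leftrightarrow(f_2,\beta)$. Note, however, that the paper itself offers no proof to compare against: the lemma is imported verbatim from an earlier work of the author (\cite[Lemma 2]{adegoke18}), so your argument is supplying a proof the present paper deliberately omits. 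What you have written is essentially the standard route for such results (one application of the recurrence per summand followed by telescoping, equivalent to an easy induction on $k$), and it is self-contained, which is a genuine improvement in readability over a bare citation. Your closing remark that ``integer $k$'' should be read as $k\ge 0$, with $k=0$ reducing to the defining recurrence, is also a worthwhile correction to the statement as printed.
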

\begin{theorem}\label{thm.ik24j18}
The following identities hold for arbitrary integers $a$, $b$, $n$, $m$ and $k$:
\begin{equation}
\begin{split}
&( - 1)^{a + b + 1} F_{m - a} \sum_{j = 0}^k {F_{m - b} ^{k - j} F_{a - b} ^j G_{n - (m - a)k - (m - b) + (m - a)j} }\\
&\qquad\qquad\qquad\qquad= F_{a - b} ^{k + 1} G_n  - F_{m - b} ^{k + 1} G_{n - (m - a)(k + 1)}\,,
\end{split}
\end{equation}
\begin{equation}
\begin{split}
&F_{m - b} \sum_{j = 0}^k {( - 1)^{(a + b + 1)(k - j)} F_{m - a} ^{k - j} F_{a - b} ^j G_{n - (m - b)k - (m - a) + (m - b)j} }\\
&\qquad\qquad\qquad= F_{a - b} ^{k + 1} G_n  - ( - 1)^{(a + b + 1)(k + 1)} F_{m - a} ^{k + 1} G_{n - (m - b)(k + 1)}\,,
\end{split}
\end{equation}
\begin{equation}
\begin{split}
&F_{a - b} \sum_{j = 0}^k {( - 1)^{(a + b)j} F_{m - a} ^{k - j} F_{m - b} ^j G_{n - (a - b)k + (m - a) + (a - b)j} }\\
&\qquad\qquad= ( - 1)^{(a + b)k} F_{m - b} ^{k + 1} G_n  + ( - 1)^{a + b + 1} F_{m - a} ^{k + 1} G_{n - (a - b)(k + 1)}\,,
\end{split}
\end{equation}
\begin{equation}
\begin{split}
&( - 1)^{a + b + 1} F_{m + b} \sum_{j = 0}^k {F_{m + a} ^{k - j} F_{a - b} ^j G_{n - (m + b)k - (m + a) + (m + b)j} }\qquad\\
&\qquad\qquad\qquad\qquad= F_{a - b} ^{k + 1} G_n  - F_{m + a} ^{k + 1} G_{n - (m + b)(k + 1)}\,,
\end{split}
\end{equation}
\begin{equation}
\begin{split}
&F_{m + a} \sum_{j = 0}^k {( - 1)^{(a + b + 1)(k - j)} F_{m + b} ^{k - j} F_{a - b} ^j G_{n - (m + a)k - (m + b) + (m + a)j} }\\
&\qquad\qquad\qquad= F_{a - b} ^{k + 1} G_n  - ( - 1)^{(a + b + 1)(k + 1)} F_{m + b} ^{k + 1} G_{n - (m + a)(k + 1)}\,,
\end{split}
\end{equation}
and
\begin{equation}
\begin{split}
&F_{a - b} \sum_{j = 0}^k {( - 1)^{(a+b)j} F_{m + b} ^{k - j} F_{m + a} ^j G_{n - (a - b)k + (m + b) + (a - b)j} }\\
&\qquad\qquad= ( - 1)^{(a+b)k} F_{m + a} ^{k + 1} G_n  + ( - 1)^{a + b + 1} F_{m + b} ^{k + 1} G_{n - (a - b)(k + 1)}\,.
\end{split}
\end{equation}

\end{theorem}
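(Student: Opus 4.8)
The plan is to recognize that these six identities are the non-binomial counterparts of the binomial identities in Theorem \ref{thm.tutbc4x}, and that they follow by exactly the same device: a direct specialization of Lemma \ref{lem.s9jfs7n}. As in the proof of Theorem \ref{thm.tutbc4x}, I would rewrite the identity of Theorem \ref{thm.main} as the three-term recurrence
\[
F_{a - b} G_n  = F_{m - b} G_{n - (m - a)}  + ( - 1)^{a + b + 1} F_{m - a} G_{n - (m - b)}\,,
\]
and read off the identification $h = F_{a-b}$, $f_1 = F_{m-b}$, $f_2 = (-1)^{a+b+1}F_{m-a}$, $X_n = G_n$, $\alpha = m-a$, $\beta = m-b$, so that $\beta - \alpha = a - b$.

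With this dictionary in hand, the first three identities are obtained by substituting directly into equations \eqref{eq.mxyb9zk}, \eqref{eq.cgldajj} and \eqref{eq.n2n4ec3} of Lemma \ref{lem.s9jfs7n}, respectively. Identities one and two fall out verbatim: the power $f_2^{k-j} = (-1)^{(a+b+1)(k-j)}F_{m-a}^{k-j}$ produces the alternating factor on the left of the second identity, while $f_2^{k+1} = (-1)^{(a+b+1)(k+1)}F_{m-a}^{k+1}$ supplies the sign on its right, and no further adjustment is required.

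The one place where care is needed is the third identity, coming from \eqref{eq.n2n4ec3}. There the summand carries the sign $(-1)^j f_2^{k-j} = (-1)^j (-1)^{(a+b+1)(k-j)} F_{m-a}^{k-j}$, and the leading terms on the right carry $(-1)^k f_1^{k+1}$ and $f_2^{k+1}$. I would collapse the exponent via
\[
(-1)^j (-1)^{(a+b+1)(k-j)} = (-1)^{(a+b+1)k} (-1)^{(a+b)j}\,,
\]
and observe that the same factor $(-1)^{(a+b+1)k}$ appears on both sides, since $(-1)^k = (-1)^{(a+b+1)k}(-1)^{(a+b)k}$ and $(-1)^{(a+b+1)(k+1)} = (-1)^{(a+b+1)k}(-1)^{a+b+1}$, all of which rest on the parity identity $(-1)^{2(a+b)k} = 1$. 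Cancelling this common factor from both sides reproduces the third identity exactly.

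Finally, the last three identities are obtained from the first three by the same symmetry already exploited for \eqref{eq.sa53jkd}--\eqref{eq.kh2azr9}, namely the substitution $a \to -b$, $b \to -a$, under which $a-b$ and the parity $a+b+1$ are invariant while $m-a \to m+b$ and $m-b \to m+a$. I expect the only genuine obstacle throughout to be the sign bookkeeping in the third and sixth identities; everything else is mechanical substitution into Lemma \ref{lem.s9jfs7n}.
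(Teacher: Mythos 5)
Your proposal is correct and follows the paper's own route exactly: substituting $h=F_{a-b}$, $f_1=F_{m-b}$, $f_2=(-1)^{a+b+1}F_{m-a}$, $X_n=G_n$, $\alpha=m-a$, $\beta=m-b$ into Lemma \ref{lem.s9jfs7n} and obtaining the last three identities via $a\to-b$, $b\to-a$. Your explicit sign bookkeeping for the third identity (cancelling the common factor $(-1)^{(a+b+1)k}$) is accurate and in fact spells out a step the paper leaves implicit.
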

\begin{proof}
In Lemma \ref{lem.s9jfs7n}, with $X_n=G_n$, use the $h$, $f_1$, $f_2$, $\alpha$ and $\beta$ obtained in the proof of Theorem \ref{thm.tutbc4x}.
\end{proof}
In particular, we have the pure Fibonacci summation identities
\begin{equation}
\begin{split}
&( - 1)^{a + b + 1} F_{m - a} \sum_{j = 0}^k {F_{m - b} ^{k - j} F_{a - b} ^j F_{n - (m - a)k - (m - b) + (m - a)j} }\\
&\qquad\qquad\qquad\qquad= F_{a - b} ^{k + 1} F_n  - F_{m - b} ^{k + 1} F_{n - (m - a)(k + 1)}\,,
\end{split}
\end{equation}
\begin{equation}
\begin{split}
&F_{m - b} \sum_{j = 0}^k {( - 1)^{(a + b + 1)(k - j)} F_{m - a} ^{k - j} F_{a - b} ^j F_{n - (m - b)k - (m - a) + (m - b)j} }\\
&\qquad\qquad\qquad= F_{a - b} ^{k + 1} F_n  - ( - 1)^{(a + b + 1)(k + 1)} F_{m - a} ^{k + 1} F_{n - (m - b)(k + 1)}\,,
\end{split}
\end{equation}
\begin{equation}
\begin{split}
&F_{a - b} \sum_{j = 0}^k {( - 1)^{(a + b)j} F_{m - a} ^{k - j} F_{m - b} ^j F_{n - (a - b)k + (m - a) + (a - b)j} }\\
&\qquad\qquad= ( - 1)^{(a + b)k} F_{m - b} ^{k + 1} F_n  + ( - 1)^{a + b + 1} F_{m - a} ^{k + 1} F_{n - (a - b)(k + 1)}\,,
\end{split}
\end{equation}
\begin{equation}
\begin{split}
&( - 1)^{a + b + 1} F_{m + b} \sum_{j = 0}^k {F_{m + a} ^{k - j} F_{a - b} ^j F_{n - (m + b)k - (m + a) + (m + b)j} }\qquad\\
&\qquad\qquad\qquad\qquad= F_{a - b} ^{k + 1} F_n  - F_{m + a} ^{k + 1} F_{n - (m + b)(k + 1)}\,,
\end{split}
\end{equation}
\begin{equation}
\begin{split}
&F_{m + a} \sum_{j = 0}^k {( - 1)^{(a + b + 1)(k - j)} F_{m + b} ^{k - j} F_{a - b} ^j F_{n - (m + a)k - (m + b) + (m + a)j} }\\
&\qquad\qquad\qquad= F_{a - b} ^{k + 1} F_n  - ( - 1)^{(a + b + 1)(k + 1)} F_{m + b} ^{k + 1} F_{n - (m + a)(k + 1)}\,,
\end{split}
\end{equation}
and
\begin{equation}
\begin{split}
&F_{a - b} \sum_{j = 0}^k {( - 1)^{(a+b)j} F_{m + b} ^{k - j} F_{m + a} ^j F_{n - (a - b)k + (m + b) + (a - b)j} }\\
&\qquad\qquad= ( - 1)^{(a + b)k} F_{m + a} ^{k + 1} F_n  + ( - 1)^{a + b + 1} F_{m + b} ^{k + 1} F_{n - (a - b)(k + 1)}\,;
\end{split}
\end{equation}
and the corresponding results involving Fibonacci and Lucas numbers:
\begin{equation}
\begin{split}
&( - 1)^{a + b + 1} F_{m - a} \sum_{j = 0}^k {F_{m - b} ^{k - j} F_{a - b} ^j L_{n - (m - a)k - (m - b) + (m - a)j} }\\
&\qquad\qquad\qquad\qquad= F_{a - b} ^{k + 1} L_n  - F_{m - b} ^{k + 1} L_{n - (m - a)(k + 1)}\,,
\end{split}
\end{equation}
\begin{equation}
\begin{split}
&F_{m - b} \sum_{j = 0}^k {( - 1)^{(a + b + 1)(k - j)} F_{m - a} ^{k - j} F_{a - b} ^j L_{n - (m - b)k - (m - a) + (m - b)j} }\\
&\qquad\qquad\qquad= F_{a - b} ^{k + 1} L_n  - ( - 1)^{(a + b + 1)(k + 1)} F_{m - a} ^{k + 1} L_{n - (m - b)(k + 1)}\,,
\end{split}
\end{equation}
\begin{equation}
\begin{split}
&F_{a - b} \sum_{j = 0}^k {( - 1)^{(a + b)j} F_{m - a} ^{k - j} F_{m - b} ^j L_{n - (a - b)k + (m - a) + (a - b)j} }\\
&\qquad\qquad= ( - 1)^{(a + b)k} F_{m - b} ^{k + 1} L_n  + ( - 1)^{a + b + 1} F_{m - a} ^{k + 1} L_{n - (a - b)(k + 1)}\,.
\end{split}
\end{equation}
\begin{equation}
\begin{split}
&( - 1)^{a + b + 1} F_{m + b} \sum_{j = 0}^k {F_{m + a} ^{k - j} F_{a - b} ^j L_{n - (m + b)k - (m + a) + (m + b)j} }\qquad\\
&\qquad\qquad\qquad\qquad= F_{a - b} ^{k + 1} L_n  - F_{m + a} ^{k + 1} L_{n - (m + b)(k + 1)}\,,
\end{split}
\end{equation}
\begin{equation}
\begin{split}
&F_{m + a} \sum_{j = 0}^k {( - 1)^{(a + b + 1)(k - j)} F_{m + b} ^{k - j} F_{a - b} ^j L_{n - (m + a)k - (m + b) + (m + a)j} }\\
&\qquad\qquad\qquad= F_{a - b} ^{k + 1} L_n  - ( - 1)^{(a + b + 1)(k + 1)} F_{m + b} ^{k + 1} L_{n - (m + a)(k + 1)}\,,
\end{split}
\end{equation}
and
\begin{equation}
\begin{split}
&F_{a - b} \sum_{j = 0}^k {( - 1)^{(a + b)j} F_{m + b} ^{k - j} F_{m + a} ^j L_{n - (a - b)k + (m + b) + (a - b)j} }\\
&\qquad\qquad= ( - 1)^{(a + b)k} F_{m + a} ^{k + 1} L_n  + ( - 1)^{a + b + 1} F_{m + b} ^{k + 1} L_{n - (a - b)(k + 1)}\,.
\end{split}
\end{equation}
\subsection{Sums involving products of Fibonacci or Fibonacci-like numbers in the denominator of the summand}
\begin{lemma}\label{lem.reciprocal1}
Let $(X_n)$ and $(Y_n)$ be any two sequences such that $X_n$ and $Y_n$, $n\in\Z$, are connected by a three-term recurrence relation $hX_n=f_1X_{n-\alpha}+f_2Y_{n-\beta}$, where $f_1$ and $f_2$ are arbitrary non-vanishing complex functions, not dependent on $n$, and $\alpha$, $\beta$ and $k$ are integers. Then,
\[
X_nX_{n - \alpha (k + 1)} f_2 \sum_{j = 0}^k h^{k-j}f_1^j{\frac{{Y_{n - \beta - \alpha k + \alpha j} }}{{X_{n - \alpha k + \alpha j} X_{n - \alpha - \alpha k + \alpha j} }}}  = h^{k+1}X_n - f_1^{k + 1}X_{n - \alpha (k + 1)}\,. 
\]
\end{lemma}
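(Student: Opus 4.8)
The plan is to eliminate $Y$ in favour of $X$ using the defining recurrence, after which the sum collapses by telescoping. First I would divide the asserted identity through by the prefactor $X_nX_{n-\alpha(k+1)}$, reducing the goal to
\[
f_2\sum_{j=0}^{k}h^{k-j}f_1^{j}\,\frac{Y_{n-\beta-\alpha k+\alpha j}}{X_{n-\alpha k+\alpha j}\,X_{n-\alpha-\alpha k+\alpha j}}=\frac{h^{k+1}}{X_{n-\alpha(k+1)}}-\frac{f_1^{k+1}}{X_n}.
\]
The key observation is that reading the recurrence $hX_n=f_1X_{n-\alpha}+f_2Y_{n-\beta}$ with $n$ replaced by $n-\alpha k+\alpha j$ yields
\[
f_2\,Y_{n-\beta-\alpha k+\alpha j}=h\,X_{n-\alpha k+\alpha j}-f_1\,X_{n-\alpha-\alpha k+\alpha j},
\]
so the numerator $f_2Y$ in each summand is precisely the difference of the two $X$-factors sitting in the denominator.

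Substituting this and separating by partial fractions turns the left-hand side into
\[
\sum_{j=0}^{k}h^{k-j}f_1^{j}\left(\frac{h}{X_{n-\alpha-\alpha k+\alpha j}}-\frac{f_1}{X_{n-\alpha k+\alpha j}}\right)=\sum_{j=0}^{k}\left(\frac{h^{k-j+1}f_1^{j}}{X_{n-\alpha-\alpha k+\alpha j}}-\frac{h^{k-j}f_1^{j+1}}{X_{n-\alpha k+\alpha j}}\right).
\]
Writing $T_j=h^{k-j+1}f_1^{j}\big/X_{n-\alpha-\alpha k+\alpha j}$, the first group is exactly $\sum_{j=0}^{k}T_j$. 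The hard part — really the only step requiring care — is checking that a shift of the summation index by one identifies the second group with $\sum_{j=1}^{k+1}T_j$: one must verify that raising the index turns the denominator $X_{n-\alpha k+\alpha j}$ into the pattern $X_{n-\alpha-\alpha k+\alpha j}$ and simultaneously matches the powers of $h$ and $f_1$. This is pure bookkeeping, since $n-\alpha k+\alpha(j-1)=n-\alpha-\alpha k+\alpha j$ and $h^{k-(j-1)}f_1^{(j-1)+1}=h^{k-j+1}f_1^{j}$.

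With the two groups so aligned, the sum telescopes to $T_0-T_{k+1}$. A direct evaluation gives $T_0=h^{k+1}/X_{n-\alpha(k+1)}$ and $T_{k+1}=f_1^{k+1}/X_n$, which is exactly the reduced right-hand side above. Multiplying back through by $X_nX_{n-\alpha(k+1)}$ then recovers $h^{k+1}X_n-f_1^{k+1}X_{n-\alpha(k+1)}$, completing the proof. No induction is needed; the telescoping structure, once exposed by the substitution, does all the work.
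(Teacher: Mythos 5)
Your proof is correct. The paper states Lemma \ref{lem.reciprocal1} without giving any proof (unlike Lemmas \ref{lem.binomial}, \ref{lem.u4bqbkc} and \ref{lem.s9jfs7n}, it is not even attributed to \cite{adegoke18}), so there is nothing to compare against line by line; but your argument --- shift the recurrence to $hX_{n-\alpha k+\alpha j}=f_1X_{n-\alpha-\alpha k+\alpha j}+f_2Y_{n-\beta-\alpha k+\alpha j}$, recognize $f_2Y$ as the difference of the two denominator factors, split by partial fractions, and telescope to $T_0-T_{k+1}$ --- is exactly the mechanism the right-hand side $h^{k+1}X_n-f_1^{k+1}X_{n-\alpha(k+1)}$ is signalling, and all the index and exponent bookkeeping in your reindexing $j\mapsto j-1$ checks out. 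The only point worth making explicit is that dividing through by $X_nX_{n-\alpha(k+1)}$ at the outset is legitimate because both of these terms already occur among the denominators $X_{n-\alpha k+\alpha j}$ (at $j=k$) and $X_{n-\alpha-\alpha k+\alpha j}$ (at $j=0$), so the non-singularity hypothesis implicit in the lemma already forces them to be nonzero; alternatively one can run the same telescoping without dividing, clearing denominators term by term, which avoids the issue entirely.
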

\begin{theorem}
The following identities hold for values of $a$, $b$, $m$, $n$, $k$ for which the summand is non-singular in the summation interval:
\begin{equation}\label{eq.vd3kfav}
\begin{split}
F_nF_{n - (a - b)(k + 1)}F_{a - b} &\sum_{j = 0}^k {( - 1)^{(a + b)j} \frac{{G_{m + a}^{k - j} G_{m + b}^j G_{n + m + b - (a - b)k + (a - b)j} }}{{F_{n - (a - b)k + (a - b)j} F_{n - a + b - (a - b)k + (a - b)j} }}}\\
&\qquad\qquad= F_nG_{m + a}^{k + 1} - ( - 1)^{(a + b)(k + 1)} F_{n - (a - b)(k + 1)}G_{m + b}^{k + 1}\,,
\end{split}
\end{equation}
\begin{equation}\label{eq.jwgeagg}
\begin{split}
F_nF_{n - (a - b)(k + 1)}F_{a - b} &\sum_{j = 0}^k {( - 1)^{(a + b)j} \frac{{G_{m - b}^{k - j} G_{m - a}^j G_{n + m - a - (a - b)k + (a - b)j} }}{{F_{n - (a - b)k + (a - b)j} F_{n + b - a - (a - b)k + (a - b)j} }}}\\
&\qquad\qquad= F_nG_{m - b}^{k + 1} - ( - 1)^{(a + b)(k + 1)} F_{n - (a - b)(k + 1)}G_{m - a}^{k + 1}\,.
\end{split}
\end{equation}

\end{theorem}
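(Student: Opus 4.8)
The plan is to obtain \eqref{eq.vd3kfav} as a direct specialization of Lemma \ref{lem.reciprocal1}, reusing verbatim the parameter identification already employed in the proof of Theorem \ref{thm.c2gyoa1}, and then to deduce \eqref{eq.jwgeagg} from \eqref{eq.vd3kfav} by the symmetry transformation $a\to -b$, $b\to -a$ that recurs throughout the paper.

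First I would rewrite identity \eqref{eq.yvmxj6w} exactly as in the proof of Theorem \ref{thm.c2gyoa1}, namely
\[
G_{m+a}F_n = (-1)^{a+b}G_{m+b}F_{n-(a-b)} + F_{a-b}G_{n+m+b}\,,
\]
which is precisely the three-term recurrence $hX_n = f_1X_{n-\alpha} + f_2Y_{n-\beta}$ required by Lemma \ref{lem.reciprocal1} once one sets
\[
X_n = F_n,\quad Y_n = G_{n+m+b},\quad h = G_{m+a},\quad f_1 = (-1)^{a+b}G_{m+b},\quad f_2 = F_{a-b},\quad \alpha = a-b,\quad \beta = 0\,.
\]
These are the same assignments used for Theorem \ref{thm.c2gyoa1}; the only change is that they are now fed into the reciprocal Lemma \ref{lem.reciprocal1} rather than into Lemma \ref{lem.u4bqbkc}.

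Next I would substitute these choices into the conclusion of Lemma \ref{lem.reciprocal1} and simplify. On the left, $f_1^j = (-1)^{(a+b)j}G_{m+b}^j$ yields the sign $(-1)^{(a+b)j}$ together with the factor $G_{m+b}^j$, while $h^{k-j}=G_{m+a}^{k-j}$; the numerator $Y_{n-\beta-\alpha k+\alpha j}=G_{n+m+b-(a-b)k+(a-b)j}$ and the two denominator factors $X_{n-\alpha k+\alpha j}=F_{n-(a-b)k+(a-b)j}$ and $X_{n-\alpha-\alpha k+\alpha j}=F_{n-a+b-(a-b)k+(a-b)j}$ emerge directly. The prefactor $X_nX_{n-\alpha(k+1)}f_2$ becomes $F_nF_{n-(a-b)(k+1)}F_{a-b}$. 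On the right, $h^{k+1}X_n - f_1^{k+1}X_{n-\alpha(k+1)}$ becomes $F_nG_{m+a}^{k+1}-(-1)^{(a+b)(k+1)}F_{n-(a-b)(k+1)}G_{m+b}^{k+1}$, matching the stated right-hand side of \eqref{eq.vd3kfav}. The non-singularity hypothesis of the theorem is exactly the requirement, implicit in Lemma \ref{lem.reciprocal1}, that $F_n$, $F_{n-(a-b)(k+1)}$, and each denominator factor appearing in the sum be nonzero.

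Finally, applying $a\to -b$, $b\to -a$ to \eqref{eq.vd3kfav} produces \eqref{eq.jwgeagg}: the combination $a-b$, and hence every offset built from $(a-b)$, is invariant; $a+b\mapsto -(a+b)$ leaves each factor $(-1)^{(a+b)(\cdots)}$ unchanged; and the replacements $G_{m+a}\mapsto G_{m-b}$, $G_{m+b}\mapsto G_{m-a}$, together with $F_{n-a+b-\cdots}\mapsto F_{n+b-a-\cdots}$, carry the first identity onto the second. I expect no genuine obstacle here, since the argument is entirely a matter of careful index and sign bookkeeping; the one place where a slip is easiest is in tracking the second denominator shift $X_{n-\alpha-\alpha k+\alpha j}=F_{n-a+b-(a-b)k+(a-b)j}$, so I would verify that factor explicitly before declaring the match complete.
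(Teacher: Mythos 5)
Your proposal is correct and follows exactly the paper's own route: the paper's proof likewise specializes Lemma \ref{lem.reciprocal1} with $X_n=F_n$, $Y_n=G_{n+m+b}$ and the parameters $h=G_{m+a}$, $f_1=(-1)^{a+b}G_{m+b}$, $f_2=F_{a-b}$, $\alpha=a-b$, $\beta=0$ from the proof of Theorem \ref{thm.c2gyoa1}, and obtains \eqref{eq.jwgeagg} from \eqref{eq.vd3kfav} by the substitution $a\to -b$, $b\to -a$. Your write-up simply makes explicit the index and sign bookkeeping that the paper leaves to the reader.
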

\begin{proof}
In Lemma \ref{lem.reciprocal1}, make the identification $X_n=F_n$ and $Y_n=G_{n+m+b}$ and use the $f_1$, $f_2$, $h$, $\alpha$ and $\beta$ obtained in the proof of Theorem \ref{thm.c2gyoa1}.
\end{proof}
Particular cases of identities \eqref{eq.vd3kfav} and \eqref{eq.jwgeagg} are the following:
\begin{equation}
\begin{split}
F_nF_{n - (a - b)(k + 1)}F_{a - b} &\sum_{j = 0}^k {( - 1)^{(a + b)j} \frac{{F_{m + a}^{k - j} F_{m + b}^j F_{n + m + b - (a - b)k + (a - b)j} }}{{F_{n - (a - b)k + (a - b)j} F_{n - a + b - (a - b)k + (a - b)j} }}}\\
&\qquad\qquad= F_nF_{m + a}^{k + 1} - ( - 1)^{(a + b)(k + 1)} F_{n - (a - b)(k + 1)}F_{m + b}^{k + 1}\,,
\end{split}
\end{equation}
\begin{equation}
\begin{split}
F_nF_{n - (a - b)(k + 1)}F_{a - b} &\sum_{j = 0}^k {( - 1)^{(a + b)j} \frac{{F_{m - b}^{k - j} F_{m - a}^j F_{n + m - a - (a - b)k + (a - b)j} }}{{F_{n - (a - b)k + (a - b)j} F_{n + b - a - (a - b)k + (a - b)j} }}}\\
&\qquad\qquad= F_nF_{m - b}^{k + 1} - ( - 1)^{(a + b)(k + 1)} F_{n - (a - b)(k + 1)}F_{m - a}^{k + 1}\,;
\end{split}
\end{equation}
and
\begin{equation}
\begin{split}
F_nF_{n - (a - b)(k + 1)}F_{a - b} &\sum_{j = 0}^k {( - 1)^{(a + b)j} \frac{{L_{m + a}^{k - j} L_{m + b}^j L_{n + m + b - (a - b)k + (a - b)j} }}{{F_{n - (a - b)k + (a - b)j} F_{n - a + b - (a - b)k + (a - b)j} }}}\\
&\qquad\qquad= F_nL_{m + a}^{k + 1} - ( - 1)^{(a + b)(k + 1)} F_{n - (a - b)(k + 1)}L_{m + b}^{k + 1}\,,
\end{split}
\end{equation}
\begin{equation}
\begin{split}
F_nF_{n - (a - b)(k + 1)}F_{a - b} &\sum_{j = 0}^k {( - 1)^{(a + b)j} \frac{{L_{m - b}^{k - j} L_{m - a}^j L_{n + m - a - (a - b)k + (a - b)j} }}{{F_{n - (a - b)k + (a - b)j} F_{n + b - a - (a - b)k + (a - b)j} }}}\\
&\qquad\qquad= F_nL_{m - b}^{k + 1} - ( - 1)^{(a + b)(k + 1)} F_{n - (a - b)(k + 1)}L_{m - a}^{k + 1}\,.
\end{split}
\end{equation}
\begin{lemma}\label{lem.reciprocal}
Let $(X_n)$ be any arbitrary sequence. Let $X_n$, $n\in\Z$, satisfy a three-term recurrence relation $hX_n=f_1X_{n-\alpha}+f_2X_{n-\beta}$, where $f_1$ and $f_2$ are non-vanishing complex functions, not dependent on $n$, and $\alpha$, $\beta$ and $k$ are integers. Then, the following identities hold for arbitrary integers $n$, $\alpha$, $\beta$ and $k$ for which the summand is not singular in the summation interval:
\begin{equation}\label{eq.q1q4vob}
X_nX_{n - \alpha (k + 1)} f_2 \sum_{j = 0}^k h^{k-j}f_1^j{\frac{{X_{n - \beta - \alpha k + \alpha j} }}{{X_{n - \alpha k + \alpha j} X_{n - \alpha - \alpha k + \alpha j} }}}  = h^{k+1}X_n - f_1^{k + 1}X_{n - \alpha (k + 1)}\,,
\end{equation}
\begin{equation}\label{eq.mg9cdve}
X_nX_{n - \beta (k + 1)} f_1 \sum_{j = 0}^k h^{k-j}f_2^j{\frac{{X_{n - \alpha - \beta k + \beta j} }}{{X_{n - \beta k + \beta j} X_{n - \beta - \beta k + \beta j} }}}  = h^{k+1}X_n - f_2^{k + 1}X_{n - \beta (k + 1)}\,,
\end{equation}
and
\begin{equation}\label{eq.l8b84rc}
\begin{split}
X_nX_{n - (\beta - \alpha)(k + 1)}h&\sum_{j = 0}^k {(-1)^jf_1^{k-j}f_2^j\frac{{X_{n + \alpha - (\beta - \alpha)k + (\beta - \alpha)j} }}{{X_{n - (\beta - \alpha)k + (\beta - \alpha)j} X_{n - \beta + \alpha - (\beta - \alpha)k + (\beta - \alpha)j} }}}\\
&\qquad= f_1^{k+1}X_n + (-1)^kf_2^{k+1}X_{n - (\beta - \alpha)(k + 1)}\,.
\end{split}
\end{equation}

\end{lemma}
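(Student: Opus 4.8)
The plan is to prove all three identities by a partial-fraction-plus-telescoping argument driven by the governing recurrence $hX_n=f_1X_{n-\alpha}+f_2X_{n-\beta}$. The first identity \eqref{eq.q1q4vob} in fact requires no new work: it is precisely the specialization $Y_n=X_n$ of Lemma \ref{lem.reciprocal1}, whose hypothesis $hX_n=f_1X_{n-\alpha}+f_2Y_{n-\beta}$ collapses to exactly our recurrence. Moreover, since that recurrence is invariant under the simultaneous interchange $(\alpha,f_1)\leftrightarrow(\beta,f_2)$, identity \eqref{eq.mg9cdve} follows from \eqref{eq.q1q4vob} by carrying out that interchange term by term. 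The only genuinely new statement is \eqref{eq.l8b84rc}, which I would obtain by running the same telescoping mechanism on a rearranged (``differenced'') form of the recurrence.

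To exhibit the mechanism, I would first rewrite the recurrence, by replacing $n$ with $s+\alpha$, in the shifted form $hX_{s+\alpha}=f_1X_s+f_2X_{s-(\beta-\alpha)}$. Setting $\gamma=\beta-\alpha$ and $s_j=n-\gamma k+\gamma j$, the generic summand of \eqref{eq.l8b84rc} is $(-1)^jf_1^{k-j}f_2^j\,X_{s_j+\alpha}/(X_{s_j}X_{s_j-\gamma})$. Dividing the shifted recurrence at $s=s_j$ by $X_{s_j}X_{s_j-\gamma}$ gives the key partial-fraction split
\[
h\,\frac{X_{s_j+\alpha}}{X_{s_j}X_{s_j-\gamma}}=\frac{f_1}{X_{s_j-\gamma}}+\frac{f_2}{X_{s_j}}\,.
\]
Multiplying by $(-1)^jf_1^{k-j}f_2^j$ and using $s_j-\gamma=n-\gamma(k+1)+\gamma j$ together with $s_j=n-\gamma(k+1)+\gamma(j+1)$, the $j$-th term becomes $v_j-v_{j+1}$, where $v_j=(-1)^jf_1^{\,k+1-j}f_2^{\,j}/X_{n-\gamma(k+1)+\gamma j}$.

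The sum therefore telescopes to $v_0-v_{k+1}=f_1^{\,k+1}/X_{n-\gamma(k+1)}+(-1)^kf_2^{\,k+1}/X_n$, and multiplying through by $X_nX_{n-\gamma(k+1)}$ reproduces \eqref{eq.l8b84rc} after restoring $\gamma=\beta-\alpha$. The identical bookkeeping applied to the undifferenced recurrence (using $f_2X_{r-\beta}=hX_r-f_1X_{r-\alpha}$) yields a self-contained proof of \eqref{eq.q1q4vob} not relying on Lemma \ref{lem.reciprocal1}, with telescoping sequence $u_j=h^{\,k+1-j}f_1^{\,j}/X_{n-\alpha(k+1)+\alpha j}$ and endpoints $u_0-u_{k+1}=h^{k+1}/X_{n-\alpha(k+1)}-f_1^{k+1}/X_n$.

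I expect the main obstacle to be clerical rather than conceptual: verifying that the index shifts align so that the second piece of the $j$-th term coincides with the first piece of the $(j+1)$-th term, i.e. confirming $s_j=s_{j+1}-\gamma$ and that the powers of $f_1,f_2$ — and the alternating sign in \eqref{eq.l8b84rc} — match across the shift. The one delicate point is the residual sign $(-1)^k$ on the right-hand side of \eqref{eq.l8b84rc}: it is $-v_{k+1}$ read off at $j=k$, where the factor $(-1)^{k+1}$ appears, so a single sign slip would corrupt the final form. Once $u_j$ and $v_j$ are pinned down, the endpoint evaluation and the clearing of denominators are immediate.
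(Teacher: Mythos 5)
Your proposal is correct: the paper states Lemma \ref{lem.reciprocal} without giving any proof (in the style of the lemmas quoted from the reference \cite{adegoke18}), and your argument --- reading \eqref{eq.q1q4vob} off as the $Y_n=X_n$ case of Lemma \ref{lem.reciprocal1}, obtaining \eqref{eq.mg9cdve} by the symmetry $(\alpha,f_1)\leftrightarrow(\beta,f_2)$ of the recurrence, and proving \eqref{eq.l8b84rc} by the partial-fraction split $h X_{s+\alpha}/(X_sX_{s-\gamma})=f_1/X_{s-\gamma}+f_2/X_s$ followed by telescoping with $v_j=(-1)^jf_1^{k+1-j}f_2^{j}/X_{n-\gamma(k+1)+\gamma j}$ --- checks out in every detail, including the residual sign $(-1)^k$. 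This is exactly the standard mechanism underlying all the summation lemmas in this paper, so nothing further is needed.
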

\begin{theorem}
The following identities hold for values of $a$, $b$, $m$, $n$, $k$ for which the summand is non-singular in the summation interval:
\begin{equation}
\begin{split}
( - 1)^{a + b + 1} F_{m - a} G_n G_{n - (m - a)(k + 1)} &\sum_{j = 0}^k {\frac{{F_{a - b}^{k - j} F_{m - b}^j G_{n - m + b - (m - a)k + (m - a)j} }}{{G_{n - (m - a)k + (m - a)j} G_{n - (m - a) - (m - a)k + (m - a)j} }}}\\
&\quad= F_{a - b} ^{k + 1} G_n  - F_{m - b}^{k + 1} G_{n - (m - a)(k + 1)}\,,
\end{split}
\end{equation}
\begin{equation}
\begin{split}
F_{m - b} G_n G_{n - (m - b)(k + 1)} &\sum_{j = 0}^k {\frac{( - 1)^{(a + b + 1)j}F_{a - b}^{k - j} { F_{m - a}}^j G_{n - (m - a) - (m - b)k + (m - b)j} }{{G_{n - (m - b)k + (m - b)j} G_{n - (m - b) - (m - b)k + (m - b)j} }}}\\
&\quad= F_{a - b} ^{k + 1} G_n  - ( - 1)^{(a + b + 1)(k+1)}F_{m - a}^{k + 1} G_{n - (m - b)(k + 1)}\,,
\end{split}
\end{equation}
\begin{equation}
\begin{split}
F_{a-b}G_nG_{n - (a - b)(k + 1)}&\sum_{j = 0}^k {\frac{(-1)^{(a+b)j}F_{m - b}^{k-j}F_{m-a}^jG_{n + m - a - (a - b)k + (a - b)j} }{{G_{n - (a - b)k + (a - b)j} G_{n - (a - b) - (a - b)k + (a - b)j} }}}\\
&\qquad= F_{m - b}^{k+1}G_n -(-1)^{(a+b)(k+1)}F_{m-a}^{k+1}G_{n - (a - b)(k + 1)}\,,
\end{split}
\end{equation}
\begin{equation}
\begin{split}
( - 1)^{a + b + 1} F_{m + b} G_n G_{n - (m + b)(k + 1)} &\sum_{j = 0}^k {\frac{{F_{a - b}^{k - j} F_{m + a}^j G_{n - m - a - (m + b)k + (m + b)j} }}{{G_{n - (m + b)k + (m + b)j} G_{n - (m + b) - (m + b)k + (m + b)j} }}}\\
&\quad= F_{a - b} ^{k + 1} G_n  - F_{m + a}^{k + 1} G_{n - (m + b)(k + 1)}\,,
\end{split}
\end{equation}
\begin{equation}
\begin{split}
F_{m + a} G_n G_{n - (m + a)(k + 1)} &\sum_{j = 0}^k {\frac{( - 1)^{(a + b + 1)j}F_{a - b}^{k - j} { F_{m + b}}^j G_{n - (m + b) - (m + a)k + (m + a)j} }{{G_{n - (m + a)k + (m + a)j} G_{n - (m + a) - (m + a)k + (m + a)j} }}}\\
&\quad= F_{a - b} ^{k + 1} G_n  - ( - 1)^{(a + b + 1)(k+1)}F_{m + b}^{k + 1} G_{n - (m + a)(k + 1)}\,,
\end{split}
\end{equation}
and
\begin{equation}
\begin{split}
F_{a - b}G_nG_{n - (a - b)(k + 1)}&\sum_{j = 0}^k {\frac{(-1)^{(a + b)j}F_{m + a}^{k-j}F_{m + b}^jG_{n + m + b - (a - b)k + (a - b)j} }{{G_{n - (a - b)k + (a - b)j} G_{n - (a - b) - (a - b)k + (a - b)j} }}}\\
&\qquad= F_{m + a}^{k+1}G_n -(-1)^{(a + b)(k+1)}F_{m + b}^{k+1}G_{n - (a - b)(k + 1)}\,.
\end{split}
\end{equation}

\end{theorem}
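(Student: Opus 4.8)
The plan is to follow exactly the template already used to prove Theorems \ref{thm.ik24j18} and \ref{thm.c2gyoa1}: reduce the entire statement to a single application of the reciprocal Lemma \ref{lem.reciprocal}, with the sequence $X_n = G_n$ and the parameter identification extracted in the proof of Theorem \ref{thm.tutbc4x}, namely
\[
h = F_{a-b},\quad f_1 = F_{m-b},\quad f_2 = (-1)^{a+b+1}F_{m-a},\quad \alpha = m-a,\quad \beta = m-b.
\]
These arise from writing the identity of Theorem \ref{thm.main} in the recurrence form $hX_n = f_1X_{n-\alpha} + f_2X_{n-\beta}$, i.e. $F_{a-b}G_n = F_{m-b}G_{n-(m-a)} + (-1)^{a+b+1}F_{m-a}G_{n-(m-b)}$, which holds for all integers $a,b,m,n$ and hence licenses the lemma.

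First I would substitute this data into the three conclusions \eqref{eq.q1q4vob}, \eqref{eq.mg9cdve} and \eqref{eq.l8b84rc} of Lemma \ref{lem.reciprocal} in turn. Noting that $\beta - \alpha = a-b$, identity \eqref{eq.q1q4vob} yields the first displayed identity of the theorem directly; \eqref{eq.mg9cdve} yields the second, with the factor $f_2^{\,j} = (-1)^{(a+b+1)j}F_{m-a}^{\,j}$ supplying the alternating sign in the summand and $f_2^{\,k+1}$ supplying the $(-1)^{(a+b+1)(k+1)}$ on the right; and \eqref{eq.l8b84rc} yields the third. The remaining three identities are then obtained, precisely as in the earlier theorems, by the substitution $a \to -b$, $b \to -a$, under which $F_{a-b}$ is invariant while $F_{m-a} \leftrightarrow F_{m+b}$ and $F_{m-b} \leftrightarrow F_{m+a}$; this sends the first three (``minus'') identities to the last three (``plus'') ones.

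I do not expect any genuine difficulty here, since the content is carried entirely by Lemma \ref{lem.reciprocal} and Theorem \ref{thm.main}, both of which may be assumed. The one place requiring care is the sign bookkeeping when $f_2 = (-1)^{a+b+1}F_{m-a}$ is raised to powers inside the lemma. In particular, for the third identity one must collapse $(-1)^j f_2^{\,j} = (-1)^{(a+b)j}F_{m-a}^{\,j}$ in the summand and, on the right, combine the explicit $(-1)^k$ of \eqref{eq.l8b84rc} with $f_2^{\,k+1} = (-1)^{(a+b+1)(k+1)}F_{m-a}^{\,k+1}$ via $(-1)^{k+(a+b+1)(k+1)} = -(-1)^{(a+b)(k+1)}$, which is exactly what turns the ``$+$'' of the lemma into the ``$-$'' appearing in the stated identity. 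Verifying that each index offset rewrites into the printed form---for instance the numerator argument $n+\alpha-(\beta-\alpha)k+(\beta-\alpha)j$ becoming $n+m-a-(a-b)k+(a-b)j$---is routine, and is the single step I would check explicitly before concluding.
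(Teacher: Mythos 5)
Your proposal is correct and is essentially identical to the paper's own proof: the paper likewise applies Lemma \ref{lem.reciprocal} with $X_n=G_n$ and the identification $h=F_{a-b}$, $f_1=F_{m-b}$, $f_2=(-1)^{a+b+1}F_{m-a}$, $\alpha=m-a$, $\beta=m-b$ inherited from the proof of Theorem \ref{thm.tutbc4x} (via Theorem \ref{thm.ik24j18}), and obtains the last three identities by the interchange $a\to-b$, $b\to-a$. Your explicit sign bookkeeping for \eqref{eq.l8b84rc}, collapsing $(-1)^{k+(a+b+1)(k+1)}$ to $-(-1)^{(a+b)(k+1)}$, is exactly right and is the only nontrivial check.
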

\begin{proof}
In Lemma \ref{lem.reciprocal}, make the identification $X_n=G_n$ and use the $f_1$, $f_2$, $h$, $\alpha$ and $\beta$ obtained in the proof of Theorem \ref{thm.ik24j18}.
\end{proof}
In particular, we have the pure Fibonacci identities:
\begin{equation}
\begin{split}
( - 1)^{a + b + 1} F_{m - a} F_n F_{n - (m - a)(k + 1)} &\sum_{j = 0}^k {\frac{{F_{a - b}^{k - j} F_{m - b}^j F_{n - m + b - (m - a)k + (m - a)j} }}{{F_{n - (m - a)k + (m - a)j} F_{n - (m - a) - (m - a)k + (m - a)j} }}}\\
&\quad= F_{a - b} ^{k + 1} F_n  - F_{m - b}^{k + 1} F_{n - (m - a)(k + 1)}\,,
\end{split}
\end{equation}
\begin{equation}
\begin{split}
F_{m - b} F_n F_{n - (m - b)(k + 1)} &\sum_{j = 0}^k {\frac{( - 1)^{(a + b + 1)j}F_{a - b}^{k - j} { F_{m - a}}^j F_{n - (m - a) - (m - b)k + (m - b)j} }{{F_{n - (m - b)k + (m - b)j} F_{n - (m - b) - (m - b)k + (m - b)j} }}}\\
&\quad= F_{a - b} ^{k + 1} F_n  - ( - 1)^{(a + b + 1)(k+1)}F_{m - a}^{k + 1} F_{n - (m - b)(k + 1)}\,,
\end{split}
\end{equation}
\begin{equation}
\begin{split}
F_{a-b}F_nF_{n - (a - b)(k + 1)}&\sum_{j = 0}^k {\frac{(-1)^{(a+b)j}F_{m - b}^{k-j}F_{m-a}^jF_{n + m - a - (a - b)k + (a - b)j} }{{F_{n - (a - b)k + (a - b)j} F_{n - (a - b) - (a - b)k + (a - b)j} }}}\\
&\qquad= F_{m - b}^{k+1}F_n -(-1)^{(a+b)(k+1)}F_{m-a}^{k+1}F_{n - (a - b)(k + 1)}\,,
\end{split}
\end{equation}
\begin{equation}
\begin{split}
( - 1)^{a + b + 1} F_{m + b} F_n F_{n - (m + b)(k + 1)} &\sum_{j = 0}^k {\frac{{F_{a - b}^{k - j} F_{m + a}^j F_{n - m - a - (m + b)k + (m + b)j} }}{{F_{n - (m + b)k + (m + b)j} F_{n - (m + b) - (m + b)k + (m + b)j} }}}\\
&\quad= F_{a - b} ^{k + 1} F_n  - F_{m + a}^{k + 1} F_{n - (m + b)(k + 1)}\,,
\end{split}
\end{equation}
\begin{equation}
\begin{split}
F_{m + a} F_n F_{n - (m + a)(k + 1)} &\sum_{j = 0}^k {\frac{( - 1)^{(a + b + 1)j}F_{a - b}^{k - j} { F_{m + b}}^j F_{n - (m + b) - (m + a)k + (m + a)j} }{{F_{n - (m + a)k + (m + a)j} F_{n - (m + a) - (m + a)k + (m + a)j} }}}\\
&\quad= F_{a - b} ^{k + 1} F_n  - ( - 1)^{(a + b + 1)(k+1)}F_{m + b}^{k + 1} F_{n - (m + a)(k + 1)}\,,
\end{split}
\end{equation}
and
\begin{equation}
\begin{split}
F_{a - b}F_nF_{n - (a - b)(k + 1)}&\sum_{j = 0}^k {\frac{(-1)^{(a + b)j}F_{m + a}^{k-j}F_{m + b}^jF_{n + m + b - (a - b)k + (a - b)j} }{{F_{n - (a - b)k + (a - b)j} F_{n - (a - b) - (a - b)k + (a - b)j} }}}\\
&\qquad= F_{m + a}^{k+1}F_n -(-1)^{(a + b)(k+1)}F_{m + b}^{k+1}F_{n - (a - b)(k + 1)}\,;
\end{split}
\end{equation}
and the corresponding identities involving Lucas and Fibonacci numbers:
\begin{equation}
\begin{split}
( - 1)^{a + b + 1} F_{m - a} L_n L_{n - (m - a)(k + 1)} &\sum_{j = 0}^k {\frac{{F_{a - b}^{k - j} F_{m - b}^j L_{n - m + b - (m - a)k + (m - a)j} }}{{L_{n - (m - a)k + (m - a)j} L_{n - (m - a) - (m - a)k + (m - a)j} }}}\\
&\quad= F_{a - b} ^{k + 1} L_n  - F_{m - b}^{k + 1} L_{n - (m - a)(k + 1)}\,,
\end{split}
\end{equation}
\begin{equation}
\begin{split}
F_{m - b} L_n L_{n - (m - b)(k + 1)} &\sum_{j = 0}^k {\frac{( - 1)^{(a + b + 1)j}F_{a - b}^{k - j} { F_{m - a}}^j L_{n - (m - a) - (m - b)k + (m - b)j} }{{L_{n - (m - b)k + (m - b)j} L_{n - (m - b) - (m - b)k + (m - b)j} }}}\\
&\quad= F_{a - b} ^{k + 1} L_n  - ( - 1)^{(a + b + 1)(k+1)}F_{m - a}^{k + 1} L_{n - (m - b)(k + 1)}\,,
\end{split}
\end{equation}
\begin{equation}
\begin{split}
F_{a-b}L_nL_{n - (a - b)(k + 1)}&\sum_{j = 0}^k {\frac{(-1)^{(a+b)j}F_{m - b}^{k-j}F_{m-a}^jL_{n + m - a - (a - b)k + (a - b)j} }{{L_{n - (a - b)k + (a - b)j} L_{n - (a - b) - (a - b)k + (a - b)j} }}}\\
&\qquad= F_{m - b}^{k+1}L_n -(-1)^{(a+b)(k+1)}F_{m-a}^{k+1}L_{n - (a - b)(k + 1)}\,,
\end{split}
\end{equation}
\begin{equation}
\begin{split}
( - 1)^{a + b + 1} F_{m + b} L_n L_{n - (m + b)(k + 1)} &\sum_{j = 0}^k {\frac{{F_{a - b}^{k - j} F_{m + a}^j L_{n - m - a - (m + b)k + (m + b)j} }}{{L_{n - (m + b)k + (m + b)j} L_{n - (m + b) - (m + b)k + (m + b)j} }}}\\
&\quad= F_{a - b} ^{k + 1} L_n  - F_{m + a}^{k + 1} L_{n - (m + b)(k + 1)}\,,
\end{split}
\end{equation}
\begin{equation}
\begin{split}
F_{m + a} L_n L_{n - (m + a)(k + 1)} &\sum_{j = 0}^k {\frac{( - 1)^{(a + b + 1)j}F_{a - b}^{k - j} { F_{m + b}}^j L_{n - (m + b) - (m + a)k + (m + a)j} }{{L_{n - (m + a)k + (m + a)j} L_{n - (m + a) - (m + a)k + (m + a)j} }}}\\
&\quad= F_{a - b} ^{k + 1} L_n  - ( - 1)^{(a + b + 1)(k+1)}F_{m + b}^{k + 1} L_{n - (m + a)(k + 1)}\,,
\end{split}
\end{equation}
and
\begin{equation}
\begin{split}
F_{a - b}L_nL_{n - (a - b)(k + 1)}&\sum_{j = 0}^k {\frac{(-1)^{(a + b)j}F_{m + a}^{k-j}F_{m + b}^jL_{n + m + b - (a - b)k + (a - b)j} }{{L_{n - (a - b)k + (a - b)j} L_{n - (a - b) - (a - b)k + (a - b)j} }}}\\
&\qquad= F_{m + a}^{k+1}L_n -(-1)^{(a + b)(k+1)}F_{m + b}^{k+1}L_{n - (a - b)(k + 1)}\,.
\end{split}
\end{equation}

\hrule

\noindent 2010 {\it Mathematics Subject Classification}:
Primary 11B39; Secondary 11B37.

\noindent \emph{Keywords: }
Fibonacci number, Lucas number, Fibonacci-like number.

\hrule



\end{document}